\theoremstyle{plain}
\newtheorem{theorem}{Theorem}
\newtheorem{corollary}[theorem]{Corollary}
\newtheorem{proposition}[theorem]{Proposition}
\theoremstyle{definition}
\theoremstyle{remark}
\newtheorem{remark}{Remark}
\newcommand{\quot}[1]{`#1'}
\newcommand{\abs}[1]{\left\vert#1\right\vert}
\def\R{\mathbb{R}}
\begin{document}


\title{Asymptotic values of solutions to a periodic linear difference equation modeling discrimination training}

\author{
\name{Natham Aguirre\thanks{CONTACT Natham Aguirre. Email: nmaguirre@uc.cl}\thanks{https://orcid.org/0000-0001-7952-8286}}
\affil{Pontificia Universidad Católica de Chile, Avda. Vicuña Mackenna 4860, Macul, Chile}
}

\maketitle

\begin{abstract}
This work is concerned with the study of $w(mT)$ as $m$ goes to infinity, where $w(t)$ evolves according to $w(t)-w(t-1)=F(t)-A(t)w(t-1)$, and where $T$ is the period of the vector $F(t)$ and the matrix $A(t)$.
Motivated by applications to associative learning, particularly to discrimination training, extra conditions are imposed on $F(t)$ and $A(t)$, one of them relating $A(t)$ to a symmetric non-negative definite matrix $K$ relevant to mathematical models of associative learning.
Structural relationships between the matrices imply an identity satisfied by the Floquet multipliers driving the dynamics of $w(mT)$ from which follows that the unstable subspace is $\ker K$.
Then, the limit of $w(mT)$ is explicitly identified when $K$ is invertible, while the limit of $Kw(mT)$ is established otherwise. Given that divergence of $w(mT)$ can happen when $K$ is singular, while $Kw(mT)$ is the psychologically relevant quantity, the result can be considered optimal.
\end{abstract}

\begin{keywords}
periodic linear difference equation; asymptotic behavior; kernel machine; mathematical modeling; mathematical psychology; associative learning
\end{keywords}

\section{Introduction}

This works concerns the study of solutions $w(t)$ to a difference equation of the form
\begin{equation}\label{eq}
w(t)-w(t-1)=F(t)-A(t)w(t-1) \quad , \quad t\in \mathbb{N}
\end{equation}
where $F(t)$ is a forcing vector in $\mathbb{R}^{n}$, $A(t)$ is a $n\times n$ real coefficient matrix, and both $F(t)$ and $A(t)$ are assumed to be periodic.

Motivated by the application of this equation to the modeling of certain behavioral phenomena (which is described below), it is assumed that there exist times $0=T_0<T_1<\cdots<T_n$ such that $F(t)$ and $A(t)$ are periodic with period $T_n$, and that in the time interval $[T_{i-1}+1, T_i]$ (for $i=1,\cdots,n$)
\begin{enumerate}
\item[i)] $F(t)$ and $A(t)$ are constant,
\item[ii)] the entry in position $(i,i)$ of $A(t)$ is non-zero,
\item[iii)] rows different from row $i$ are zero in both $F(t)$ and $A(t)$.
\end{enumerate}
Moreover, I assume that
\begin{enumerate}
\item[iv)] the entries of $A(t)$ belong to the interval $[0,1)$,
\item[v)] there exists a diagonal matrix $D$ with positive entries such that the matrix $K$ given by
\[
K=D\sum_{i=1}^{n}A(T_i)
\]
is symmetric and non-negative definite.
\end{enumerate}
Note that the periodicity assumptions make \eqref{eq} a particular case of an inhomogeneous Floquet-type system. 

The main objective of this work is to study the evolution of $w$ at times $t=mT_n$ and to determine, when possible, the limit of $Kw(mT_n)$ as $m\rightarrow \infty$. Thus, this work may be considered related to those concerned with the study of asymptotic behavior of Floquet-type systems in the context of mathematical modeling \cite{SGMNP16,K22,ST11} or control theory \cite{DT98,LY97}. On the other hand, the particular case of equation \eqref{eq} with periodic coefficients and forcing has been studied in \cite{NNS08}. There, the authors derive a representation of solutions based on Floquet multipliers, which in turn leads to several general results regarding boundedness and periodicity of solutions. However, the particularity of the assumptions and objectives proposed make this work deviate from the method in \cite{NNS08}.   

The assumptions imposed on the forcing vector $F(t)$ and coefficient matrix $A(t)$, as well as the interest on $w(mT_n)$ and $Kw(mT_n)$, are motivated by the mathematical modeling of certain behavioral phenomena of interest in the field of associative learning. 

Broadly, associative learning is concerned with the study of how, as a result of experience, organisms may come to link (or associate) different events \cite{WM97}. In general, it is proposed that the concurrent presentation of stimuli creates an association (at the level of representations) between them, and that these associations may alter behavior \cite{RW72,W03}.
For example, Pavlov \cite{Pavlov1927} noted that if a certain sound (initially with no visible effect on the behavior of an organism) is repeatedly presented along with food, which naturally produces a salivary response, then the sound will come to produce a salivary response by itself. 
From a theoretical point of view, an association between sound and food is developed as a result of the repeated concurrent presentation of the two stimuli. Hence, afterwards, presentation of the sound activates an internal representation of food which in turn leads to a salivary response.

To explain changes in the association between a stimulus $x$ (such as a sound) and a fixed target stimulus (such as food) some theories have developed mathematical models where the strength of the association is quantified as an \quot{associative value} (e.g. \cite{RW72, P94, TL20, W03, G20}). These models are normally used in training situations where a set of different stimuli, such as tones or lights of certain frequencies, each called a \quot{conditioned stimulus}, are being trained along with a fixed biologically relevant stimulus (such as food) which is called the \quot{unconditioned stimulus}. 

As it turns out, some of the most prominent of these models can be characterized as a particular class of \quot{kernel machines} \cite{G18} (i.e., an infinite dimensional perceptron, \cite{JSW07}) which measure the associative value $V(x)$ of a conditioned stimulus $x$ as 
\[
V(x)=\sum_{y\in \mathbb{T}}k(y,x)w_y
\]
where $k$ is a model-specific positive definite kernel (i.e., a symmetric function such that for any finite choice of $x_i$'s the matrix $[k(x_i,x_j)]_{i,j}$ is non-negative definite), $\mathbb{T}$ is the set of all relevant conditioned stimuli being trained, and $w_y$ are weights \cite{A25,A24,G18,G15}. 

In turn, training is assumed to change associative values by changing the values of the weights. Concretely, upon any discrete training event involving $x$, the weight $w_x$ (and only this weight) is updated according to the \quot{error correction rule}
\[
\Delta w_x=\delta_x (\Lambda-V(x))
\]
where $\delta_x\in (0,1)$ is a learning rate parameter that could depend on $x$ and the type of training event, and $\Lambda$ is either a positive number $\lambda$ or $0$ according to if the unconditioned stimulus was or not present during the training event (if it was present, the training event is called a \quot{reinforcement}; if not, it is called a \quot{non-reinforcement}). 

To get a sense of how these models work, the first principle to have in mind is that an associative value close to $\lambda$ should correlate with the conditioned stimulus having strong effect on behavior, while an associative value close to $0$ should correlate with the conditioned stimulus having little or no effect on behavior.
Then, note that in the case of only one conditioned stimulus $x$ starting at $V(x)=0$ (and with $0<\delta_xk(x,x)<1$) repeated reinforcement leads to the associative value $V(x)=w_xk(x,x)$ converging asymptotically and monotonically to the value $\lambda$. On the contrary, starting now with $V(x)>0$, repeated non-reinforcement leads to the associative value converging asymptotically and monotonically to $0$. Hence, repeated reinforcement is expected to increase the behavioral control of $x$ to a maximum, while repeated non-reinforcement is expected to diminish it up to null control on behavior.
Notably, in cases where more than one conditioned stimulus is considered, the interaction between different weights usually leads to interesting phenomena whose explanation is the purpose of these models \cite{WM97,ME02}.

It can be seen that the dynamics of these models can be captured by equation \eqref{eq}. Indeed, if $\mathbb{T}$ is composed of $n$ conditioned stimuli $x_1,\cdots,x_n$, then let $w(t)$ be the vector in $\mathbb{R}^n$ whose $i$-th component is the weight $w_{x_i}$ after the training event $t$. 
Such $w(t)$ satisfies equation \eqref{eq} if, given that some $x_i$ is being trained at time $t$, $F(t)$ is defined as the vector whose $i$-th component is $\delta_{x_i}\Lambda$ while the remaining entries are zero, and $A(t)$ is the matrix whose $i$-th row is $\langle \delta_{x_i}k(x_1,x_i),\delta_{x_i}k(x_2,x_i),\cdots, \delta_{x_i}k(x_n,x_i)\rangle$ while all the others rows are zero. Note that row $i$ of $F(t)$ could be zero (if training is of non-reinforcement type), but that the entry $(i,i)$ of $A(t)$ can be assumed non-zero since in the models we are interested $k(x_i,x_i)$ is always positive. Similarly, the entries of $A(t)$ can be assumed non-negative and smaller than $1$ as this is assumed true for $\delta_{x_j}k(x_i,x_j)$ in the models under consideration. This explains assumptions ii), iii) and iv). Moreover, under the piece-wise constant assumption on $A(t)$, it can be checked that assumption v) is satisfied with $D$ the diagonal matrix whose entries are $\delta_{x_i}^{-1}$ and $K=[k(x_i,x_j)]_{i,j}$ (which is symmetric and non-negative definite since $k$ is a kernel). Note also that the vector $V(t)$ containing in the $i$-th component the associative value of $x_i$ after training event $t$, satisfies $V(t)=Kw(t)$ (hence the psychological relevance of obtaining information about $Kw(t)$). 

Both the periodicity assumption and assumption i), which indicates the forcing vector $F(t)$ and the coefficient matrix $A(t)$ are piece-wise constant, have been imposed to capture the particularity of \quot{discrimination training}, a common application of these models \cite{P94,W03,WM97}. In this type of training, a set of conditioned stimuli $x_1,\cdots,x_n$ sharing some features are either reinforced or non-reinforced, consistently and repeatedly. Hence, the periodicity assumption together with assumption i) model the case of a discrimination training involving $n$ conditioned stimuli where, for each $i=1,\cdots, n$, stimulus $x_i$ is being either reinforced or non-reinforced consistently from time $T_{i-1}+1$ to time $T_i$, and where the whole training repeats after $T_n$ training events. The training from $(m-1)T_{n}+1$ to $mT_n$ can be considered the $m$-th \quot{training block}, and the interest on $w(mT_n)$ and $Kw(mT_n)$ derives from the fact that associative values are normally measured after some fixed number of training blocks. 

While organisms often show that, with enough training, they can \quot{solve} the discrimination (that is,  they have stronger behavioral responses to those conditioned stimuli being reinforced than to those that are non-reinforced) some models may fail to predict this success \cite{A24}. Hence, the ability to correctly predict the result of a discrimination training has been considered a benchmark for models \cite{WM97}. 

From a mathematical point of view, a model could be considered to successfully predict that a discrimination will be solved if it predicts that the associative values $V(t)=Kw(t)$ converge to a vector whose $i$-th entry is $\lambda$ if $x_i$ is being reinforced and $0$ otherwise. This means that, upon enough training, the model predicts that the conditioned stimuli being reinforced have a strong effect on behavior, while those non-reinforced have little or no effect on behavior. 

Despite the interest in studying asymptotic associative values under a discrimination training, researchers in the field have not considered the problem systematically, relying instead on computer simulations and ad-hoc assumptions to draw conclusions \cite[e.g.,][]{BVW00,P94,W03,A24}. Hence, the following work aims to provide a general treatment of the problem, in order to obtain conditions under which a model may or not predict that a discrimination will be solved and, more generally, obtain analytic expressions for the asymptotic associative values predicted under a discrimination training. 

To express the main result, let 
\[
\Sigma=\sum_{i=1}^nA(T_i) \quad , \quad F=\sum_{i=1}^nF(T_i).
\]
Note $A(T_i)$ and $F(T_i)$ are the matrix and vector reflecting training during the time period $[T_{i-1}+1,T_i]$ and, since these have disjoint non-zero rows, the entries of $\Sigma$ and $F$ contain all the relevant information related to the parameters of training. Hence, I call $F$ the \quot{training vector} and $´\Sigma$ the \quot{training matrix}. Note that, by assumption, $K=D\Sigma$.
Also, for any given matrix, its stable subspace is understood as the space spanned by the generalized eigenvectors of the matrix corresponding to eigenvalues $\mu$ such that $\left| \mu\right| < 1$. Conversely, its unstable subspace corresponds to the space generated by the generalized eigenvectors corresponding to eigenvalues $\mu$ such that $\left| \mu\right| \geq 1$.

\begin{theorem}\label{main_thm}
Let $a_{i,j}$ be the entries of $\Sigma$, and define $L$ as the $n\times n$ invertible lower triangular matrix with entries $l_{i,j}$ given by
\[
l_{i,j}=\begin{cases}
a_{i,j}& , j\leq i-1\\
\frac{a_{i,i}}{1-(1-a_{i,i})^{T_i-T_{i-1}}}& , j=i\\
0&, j\geq i+1.
\end{cases}
\]
\begin{enumerate}
\item If $\Sigma$ is invertible then
\[
\lim_{m\rightarrow\infty} w(mT_n)=\Sigma^{-1}F.
\]
\item Let $P$ be the projection whose image is the stable subspace of $I-L^{-1}\Sigma$ and whose kernel is the unstable subspace of $I-L^{-1}\Sigma$. Then
\[
\lim_{m\rightarrow\infty} \Sigma w(mT_n)=LPL^{-1}F.
\]
\end{enumerate}
\end{theorem}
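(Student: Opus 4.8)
The core idea is to reduce the evolution of $w(mT_n)$ to a single linear map — the monodromy operator of one training block — and to identify that map explicitly. Within the block $[T_{i-1}+1, T_i]$ only the $i$-th coordinate of $w$ changes, and it evolves according to a scalar affine recursion $w_i(t) = (1-a_{i,i})w_i(t-1) + (\text{stuff})$, where the ``stuff'' is constant in $t$ but depends on the *other* coordinates through the off-diagonal entries of the row $A(T_i)$. Iterating this scalar recursion over the $T_i - T_{i-1}$ steps of the sub-block and using the geometric sum $\sum_{k=0}^{N-1}(1-a_{i,i})^k = \frac{1-(1-a_{i,i})^N}{a_{i,i}}$ (valid since $a_{i,i}\in(0,1)$ by iv)) produces, after the whole block, a map of the form $w \mapsto w - L^{-1}\Sigma\, w + L^{-1}F$ for the specific lower-triangular $L$ in the statement — the diagonal entry $\frac{a_{i,i}}{1-(1-a_{i,i})^{T_i-T_{i-1}}}$ being exactly the reciprocal of that geometric factor, and the strict lower-triangular part being $a_{i,j}$ because when coordinate $i$ updates, coordinates $j<i$ have already been updated this block while coordinates $j>i$ have not. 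So the first step is to carefully carry out this per-block computation and show $w((m{+}1)T_n) = (I - L^{-1}\Sigma)\,w(mT_n) + L^{-1}F$.

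**Step two: the fixed point and the Floquet identity.** Since $L$ is invertible, $I-L^{-1}\Sigma$ is the block monodromy matrix, and a fixed point $w^\ast$ of the affine map must satisfy $L^{-1}\Sigma w^\ast = L^{-1}F$, i.e. $\Sigma w^\ast = F$; when $\Sigma$ is invertible this forces $w^\ast = \Sigma^{-1}F$, which will be the candidate limit in part 1. For convergence I need the spectrum of $M := I - L^{-1}\Sigma$. Here I would invoke the structural identity promised in the abstract: because $K = D\Sigma$ is symmetric non-negative definite, $L^{-1}\Sigma$ is similar to a matrix whose eigenvalues coincide, up to sign/shift bookkeeping, with those of a non-negative definite operator, giving that every eigenvalue of $L^{-1}\Sigma$ has positive real part (or, more precisely, the eigenvalues of $M$ lie in the closed unit disk, with those on the unit circle — here only the eigenvalue $1$ — corresponding exactly to $\ker\Sigma = \ker K$). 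Granting this, in the invertible case $M$ has spectral radius $<1$, hence $M^m \to 0$ and $w(mT_n) \to w^\ast = \Sigma^{-1}F$ by the standard geometric-series argument for affine iterations, which is part 1.

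**Step three: the singular case via the spectral projection.** When $\Sigma$ is singular, decompose $\mathbb{R}^n = \operatorname{im}P \oplus \ker P$ where $P$ is the stated spectral projection of $M$ (stable part vs. the generalized eigenspace for eigenvalue $1$). On $\operatorname{im}P$ the map $M$ is a contraction, so the $\operatorname{im}P$-component of $w(mT_n)$ converges to the unique fixed point of the restricted affine map there; on $\ker P$ the dynamics may diverge (the $w(mT_n)$ themselves need not converge), but — and this is the crux — $\Sigma$ annihilates the directions responsible for that divergence. Concretely, I want to show $\Sigma M = \Sigma$ modulo the stable part, equivalently that $\Sigma$ vanishes on $\ker P$ and that $\Sigma$ intertwines appropriately, so that $\Sigma w(mT_n) = \Sigma P w(mT_n) \to \Sigma (\text{limit of } Pw)$. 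Computing that limit: the fixed-point equation on $\operatorname{im}P$ gives $Pw^\ast_{\mathrm{st}} = $ (the relevant solution of $L^{-1}\Sigma w = L^{-1}F$ restricted there), and pushing through $\Sigma = L\,(L^{-1}\Sigma)$ together with $L^{-1}\Sigma = I - M$ and the identity $(I-M)P = P(I-M)$ collapses the answer to $\Sigma w(mT_n) \to L P L^{-1} F$, which is part 2.

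**Main obstacle.** The routine part is the per-block scalar iteration and the affine-dynamics bookkeeping. The real work is Step two: establishing the spectral identity that pins the unstable subspace of $M$ to $\ker K$ and shows the only unit-modulus eigenvalue is $1$ with its generalized eigenspace equal to $\ker\Sigma$. This needs the symmetry of $K=D\Sigma$ together with the sign structure of $L$ (in particular that $DL$ or $L^{\top}D^{-1}$ is positive definite, or some such companion of $K$), and I expect it to require a careful similarity transformation — conjugating $L^{-1}\Sigma$ by $D$ or $L$-related factors to expose a symmetric non-negative definite matrix — after which a congruence/field-of-values argument controls the eigenvalues of $M=I-L^{-1}\Sigma$. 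Verifying that $\Sigma$ genuinely kills $\ker P$ (so that $\Sigma w(mT_n)$ converges even when $w(mT_n)$ does not) is the second delicate point, and it hinges on the same identity $\ker P = $ (generalized eigenspace of $M$ at $1$) $= \ker\Sigma$.
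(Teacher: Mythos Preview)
Your overall architecture matches the paper's exactly: derive the block recursion $y(m)=My(m-1)+b$ with $M=I-L^{-1}\Sigma$ and $b=L^{-1}F$; locate the spectrum of $M$; then use $\Sigma(I-M^m)=L^{-1}\Sigma\cdot L\sum_{s=0}^{m-1}M^s$ and $\ker P=\ker\Sigma$ to pass to the limit of $\Sigma y(m)$. Steps~1 and~3 are routine and essentially as in the paper.

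The genuine gap is Step~2, and your proposed attack on it is the wrong instinct. You conjecture a similarity transformation that conjugates $L^{-1}\Sigma$ into something symmetric non-negative definite; no such conjugation is evident, and the paper does not use one. What makes the spectral bound work is a \emph{quadratic-form identity}, not a similarity. The crucial observation is that $L$ and $\Sigma$ share the same strict lower-triangular part (by construction of $L$), hence $DL$ and $K=D\Sigma$ share the same strict lower-triangular part; since $K$ is symmetric, one obtains for every real $x$
\[
x^{T}DL\,x \;=\; \tfrac12\,x^{T}Kx \;+\; c(x),\qquad c(x)=\sum_{i=1}^{n}\Bigl(d_i r_i^{-1}-\tfrac12 k_{i,i}\Bigr)x_i^{2}>0\ \text{for }x\neq 0.
\]
Writing the eigenvector equation as $Kz=(1-\mu)DLz$ and pairing against the real and imaginary parts of $z$ (using only $u^{T}Kv=v^{T}Ku$) then yields, after elementary algebra, $\tfrac{1-|\mu|^{2}}{2}\bigl(u^{T}Ku+v^{T}Kv\bigr)=|1-\mu|^{2}\bigl(c(u)+c(v)\bigr)$, which forces $|\mu|\le 1$ with equality only at $\mu=1$. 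The same identity, applied to a putative Jordan chain at $\mu=1$, shows $x^{T}DLx=\tfrac12 x^{T}Kx+c(x)=c(x)$ must vanish, so $\mu=1$ is non-defective and its eigenspace is exactly $\ker K=\ker\Sigma$. This last point (semisimplicity of $\mu=1$) is essential for your Step~3, since you need $M^{m}(I-P)=I-P$ rather than polynomial growth; you noted the conclusion $\ker P=\ker\Sigma$ but not that proving it requires ruling out Jordan blocks.

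In short: your roadmap is correct, but the engine that drives the spectral step is the identity $x^{T}DLx=\tfrac12 x^{T}Kx+c(x)$ coming from the shared lower-triangular structure of $L$ and $\Sigma$, not a similarity argument.
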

Note that the condition that $\Sigma$ is invertible is really a condition on the values that the kernel takes on the conditioned stimuli since, by assumption, $K=D\Sigma$ and $D$ is invertible. Also, since $L$ combines information regarding the coefficients of the training matrix together with information about the actual length of training events (represented by the terms $T_i-T_{i-1}$), I call $L$ the \quot{effective training matrix}.

For all practical purposes, the above theorem completely achieves the main goal of this work. Indeed, it first states that if the training matrix $\Sigma$ is invertible then the weights (at the end of each training block) will converge to $\Sigma^{-1}F$, and so the associative values $V=Kw$ will converge to $DF$ which means the discrimination is correctly solved (since $DF$ has entry $i$ equal to $\lambda$ when the corresponding conditioned stimulus $x_i$ is of the reinforced type and $0$ otherwise). 
Secondly, when the training matrix $\Sigma$ is not invertible it is known that the weights may not converge \cite{A24}, but the theorem states that the associative values $Kw$ always do. In particular, in this last case, a model will predict the discrimination is solved if and only if the training is such that $L^{-1}F$ lies in the stable subspace of $I-L^{-1}\Sigma$.

As I show below, $P$ is the identity when $\Sigma$ is invertible. Hence, both statements of the theorem are summarized by saying that models predict convergence of associative values to $DLPL^{-1}F$. This means that, in the coordinate system given by the columns of $L$, the model dismisses those parts of the training vector $F$ which lie on $\ker P$. However, the psychological meaning of this result, in terms of stimulus processing and their effect on associative values and behavior, is not obvious at this point. For example, the entries of $K=D\Sigma$ are closely tied to how each model understand stimulus processing, but the conditions on a model which would guarantee that $K$ is invertible are not known. 
Understanding this issue could also help device experimental designs testing the accuracy of models. Indeed, according to the theorem, to have different predictions for different models it is necessary that at least one of the respective matrices $K$ is singular. 
Future work should attempt to explore these issues and, more broadly, discuss the general implications of Theorem \ref{main_thm} to associative learning.

While the main result of this work completely characterizes the predictions made by models that are describable as kernel machines, it does so under a hypothesis of periodic training. Since experimenters often choose to randomize the order in which training is carried out within each training block, the result may be said to apply to the idealized experimental design rather than to the actual experimental procedure.  Hence, it would be interesting to consider the possibility of introducing some randomness within each training block. However, as the periodicity assumption is fundamental, extending the result in this direction does not seem straightforward. This, together with the possibility of introducing other sources of randomness (such as a small additive noise for example), could be a relevant subject for future research. 

The proof of the main result begins by identifying that $y(m)=w(mT_n)$ satisfies the autonomous equation
\[
y(m)=(I-L^{-1}\Sigma)y(m-1)+L^{-1}F.
\]
Then, using that $K=D\Sigma$ is non-negative definite, it can be deduced that the eigenvalues $\mu$ of $I-L^{-1}\Sigma$ (the Floquet multipliers of the system) satisfy $\abs{\mu}\leq 1$, with $\abs{\mu}=1$ only when $\mu=1$. Moreover, $\mu=1$ is non-defective, and so the unstable subspace is associated exclusively with the multiplier $\mu=1$ and coincides with $\ker \Sigma$. This is the reason why applying the training matrix $\Sigma$ makes the weights converge. 

It should be remarked that, while Floquet theory is usually applied to homogeneous systems, this work shows that by looking at multiples of the period it is possible to extend its application to an inhomogenous problem such as \eqref{eq}. This idea has also been used in \cite{ST11}.

This work is organized as follows. Firstly, in Section \ref{derivations}, I derive the autonomous equation satisfied by $w(mT_n)$. Then, in Section \ref{training}, I find the expressions for the coefficient matrix and the forcing vector of the autonomous equation in terms of the training matrix $\Sigma$, the effective training matrix $L$, and the training vector $F$. Afterwards, in Section \ref{eigenvalues}, I carry out the study of Floquet multipliers and prove Theorem \ref{main_thm}. Finally, in Section \ref{application}, I contrast the results obtained here with the ones obtained in \cite{A24} where asymptotic associative values were found by explicitly solving the equations in the context of concrete models. 

\begin{remark}
 A model could be considered to successfully predict that a discrimination will be solved under a relaxed definition. For example, it may be enough that there exists positive constants $d_1<d_2$, with $d_2-d_1$ large enough, such that those conditioned stimuli being reinforced have asymptotic associative values larger than $d_2$ while those being non-reinforced have asymptotic associative values smaller than $d_1$. However, regardless of the definition put forward, by either the verification that $K$ is invertible or the computation of $LPL^{-1}F$, the theorem obtained here would allow researchers to determine if a model successfully predicts that a discrimination will be solved.
\end{remark}

\section{Derivation of the autonomous equation}\label{derivations}

I begin by deriving the equations for $w(mT_n)$ under the initial condition $w(0)=w_0$. This is done by first obtaining an explicit expression for the solution $w(t)$, and then using the periodicity of the coefficient matrix $A(t)$ and the forcing vector $F(t)$ to express $w(mT_n)$ in terms of $w((m-1)T_n)$. Given the periodicity of $A(t)$ and $F(t)$, the resulting equation has constant coefficient matrix and constant forcing vector. 

Since it is helpful here and later, let 
\[
A_i=A(T_i) \quad , \quad F(T_i)=f_i\bar{e}_i
\]
for some $f_i$ (here and throughout $\bar{e}_i$ is reserved for the standard unit vectors), and recall $a_{i,j}$ are the entries of the training matrix
\[
\Sigma=\sum_{i=1}^{n}A_{i}.
\]

Given the assumptions on $A_i$, it is easy to show that $I-A_i$ are invertible matrices.
\begin{proposition}
The eigenvalues of $I-A_i$ are $1$ and $1-a_{i,i}\in (0,1)$.
\end{proposition}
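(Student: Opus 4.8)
The plan is to exploit the sparsity structure of $A_i$. By assumption iii), in the interval $[T_{i-1}+1,T_i]$ every row of $A(t)$ other than row $i$ vanishes; since $A_i=A(T_i)$, the matrix $A_i$ has all rows equal to zero except possibly its $i$-th row. Moreover, because the $A_j$ have pairwise disjoint sets of nonzero rows (row $j$ for $A_j$), the $i$-th row of $\Sigma=\sum_j A_j$ coincides with the $i$-th row of $A_i$; hence the $i$-th row of $A_i$ is exactly $(a_{i,1},\dots,a_{i,n})$. Writing $r_i$ for the vector with these components, this says $A_i=\bar e_i\, r_i^{\,T}$, a rank-one matrix (genuinely rank one since $a_{i,i}\neq 0$ by assumption ii)).

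First I would compute the characteristic polynomial of $I-A_i$ directly. We have $\det\big(\lambda I-(I-A_i)\big)=\det\big((\lambda-1)I+A_i\big)$, and the matrix $(\lambda-1)I+A_i$ differs from $(\lambda-1)I$ only in row $i$: every row $j\neq i$ has the single nonzero entry $\lambda-1$ on the diagonal. Cofactor expansion successively along these $n-1$ rows reduces the determinant to $(\lambda-1)^{n-1}$ times the one remaining diagonal entry from row $i$, namely $(\lambda-1)+a_{i,i}$. Hence the characteristic polynomial is $(\lambda-1)^{n-1}\big(\lambda-1+a_{i,i}\big)$, whose roots are $\lambda=1$ and $\lambda=1-a_{i,i}$. (Equivalently, one may invoke that the spectrum of the rank-one matrix $\bar e_i\, r_i^{\,T}$ is $\{0,\, r_i^{\,T}\bar e_i\}=\{0,\,a_{i,i}\}$ and subtract from $1$.)

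Finally I would check the localization $1-a_{i,i}\in(0,1)$: assumption iv) gives $a_{i,i}\in[0,1)$ and assumption ii) gives $a_{i,i}\neq 0$, so $a_{i,i}\in(0,1)$ and therefore $1-a_{i,i}\in(0,1)$. In particular all eigenvalues of $I-A_i$ are positive, so $I-A_i$ is invertible, as anticipated in the text. There is no genuine obstacle here; the only point requiring a little care is correctly identifying the $i$-th row of $A_i$ from $\Sigma$, which is immediate from the disjoint-support structure in assumptions i)--iii).
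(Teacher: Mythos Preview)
Your proof is correct. The paper takes a slightly different but equally elementary route: rather than computing the characteristic polynomial, it exhibits explicit eigenvectors of $A_i$, namely $\bar e_i$ (with eigenvalue $a_{i,i}$) and $\bar e_j-\tfrac{a_{i,j}}{a_{i,i}}\bar e_i$ for $j\neq i$ (with eigenvalue $0$), and then shifts by $I$. Your cofactor-expansion (equivalently, rank-one) argument reaches the same conclusion without producing eigenvectors, which is perfectly adequate here since only the eigenvalues are needed for invertibility of $I-A_i$; the paper's explicit eigenvectors are not used elsewhere either. One minor notational point: the symbol $r_i$ is reserved later in the paper for the scalar $\tfrac{1-(1-a_{i,i})^{L_i}}{a_{i,i}}$, so if this proof is to be inserted you should rename your row vector to avoid a clash.
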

\begin{proof}
Since only row $i$ of $A_i$ is nonzero
\[
A_i\bar{e}_i=a_{i,i}\bar{e}_i
\]
while for $j\neq i$
\[
A_i\bar{e}_j=a_{i,j}\bar{e}_i=\frac{a_{i,j}}{a_{i,i}}A_i\bar{e}_i \Rightarrow A_i(\bar{e}_j-\frac{a_{i,j}}{a_{i,i}}\bar{e}_i)=0.
\]
Hence, $A_i$ has two eigenvalues: $a_{i,i}$ (with geometric multiplicity $1$) and $0$ (with geometric multiplicity $n-1$).
Thus, to finish the proof, it suffices to note that $\mu$ is eigenvalue of $A_i$ if and only if $1-\mu$ is eigenvalue of $I-A_i$.
\end{proof}

With the above proposition, it follows the homogeneous problem has fundamental matrix
\[
\Phi(t)=\prod_{s=1}^{t}(I-A(s))
\]
where the product is always understood as written from right to left and the empty product is defined as $1$. Then, the solution of the initial value problem is 
\[
w(t)=\Phi(t)w_0 + \Phi(t)\sum_{s=1}^{t}\Phi^{-1}(s)F(s).
\]

To continue, for $i=1,\cdots,n$ let $L_{i}=T_{i}-T_{i-1}$ be the length of the training events from $T_{i-1}+1$ to $T_{i}$, and let
\[
M=\prod_{s=1}^n(I-A_s)^{L_s}=\Phi(T_n).
\]
This matrix is often called the \quot{monodromy matrix} in Floquet theory.

Then, a somewhat lengthy but simple computation shows that
\begin{align*}
w(mT_{n})&= \left[ \prod_{s=1}^{mT_n}(I-A(s))\right] w_0+ \left[ \prod_{s=1}^{mT_n}(I-A(s))\right]\sum_{s=1}^{mT_{n}}\Phi(s)^{-1} F(s)\\
	&= Mw((m-1)T_n)+\sum_{s=1+(m-1)T_n}^{mT_{n}}\left[ \prod_{i=s+1}^{mT_n}(I-A(i))\right] F(s) \\
	&= Mw((m-1)T_n)+ \sum_{s=1}^{T_{n}}\left[ \prod_{i=s+1}^{T_n}(I-A(i))\right] F(s) \\
	&= Mw((m-1)T_n)+\sum_{l=1}^nf_l\frac{1-(1-a_{l,l})^{L_l}}{a_{l,l}}\left[ \prod_{i=l+1}^{n}(I-A_i)^{L_i}\right]\bar{e}_l .
\end{align*}
That is, $y(m)=w(mT_n)$ satisfies the equation
\[
y(m)= My(m-1)+b
\]
where
\[
b=\sum_{l=1}^nf_l\frac{1-(1-a_{l,l})^{L_l}}{a_{l,l}}\left[ \prod_{i=l+1}^{n}(I-A_i)^{L_i}\right]\bar{e}_l .
\]
In particular, 
\[
y(m)=M^{m}w_0+\sum_{s=0}^{m-1}M^{s}b.
\]

To continue, I turn to the study of the monodromy matrix $M$ and the vector $b$.

\section{Study of $M$ and $b$}\label{training}

The aim of this section is to express the monodromy matrix $M$ and the vector $b$ in terms of the training matrix $\Sigma$, the training vector $F$, and the effective training matrix $L$.

\subsection{Study of $M$}
The first objective is to relate the monodromy matrix $M$ to the training matrix $\Sigma$. By considering the structure of the matrices $(I-A_i)^{L_i}$ (which are related to $\Sigma$) it is possible to describe them as a product of $n$ elemental matrices. Then, $M$ itself can be described as a product of $n^2$ elemental matrices, from where a representation of the entries of $M$ can be deduced. Since $I-M$ only significantly changes on the diagonal, it is then possible to find row operations that reduce $I-M$ to $\Sigma$. The effective training matrix $L$ appears then as the matrix representing these row operations. 

Now, recall
\[
M=\prod_{i=1}^n(I-A_i)^{L_i} \quad , \quad \Sigma=\sum_{i=1}^n A_i.
\]
Also, since it is useful here and later, let
\[
\gamma_i=(1-a_{i,i})^{L_i} \quad , \quad r_i=\frac{1-(1-a_{i,i})^{L_i}}{a_{i,i}}=\frac{1-\gamma_i}{a_{i,i}}.
\]
Note that the effective training matrix $L=[l_{i,j}]$ defined on the introduction has
\[
l_{i,j}=\begin{cases}
a_{i,j}& , j\leq i-1\\
r_i^{-1}& , j=i\\
0&, j\geq i+1.
\end{cases}
\]

\begin{theorem}
$I-M=L^{-1}\Sigma$.
\end{theorem}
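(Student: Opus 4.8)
The plan is to compute both sides explicitly on the standard basis and compare. Since each $A_i$ has a single nonzero row (row $i$) and $A_i\bar e_i = a_{i,i}\bar e_i$, the matrix $(I-A_i)$ acts as the identity on every coordinate except that it mixes things into row $i$. More precisely, one can write $(I-A_i) = I - \bar e_i v_i^{\top}$ where $v_i^{\top}$ is the $i$-th row of $A_i$, i.e. $v_i^{\top} = \sum_j a_{i,j}\bar e_j^{\top}$. A rank-one update of this form has the convenient property that $(I-\bar e_i v_i^{\top})^{L_i} = I - \bar e_i v_i^{\top}\,\frac{1-(1-v_i^{\top}\bar e_i)^{L_i}}{v_i^{\top}\bar e_i}$, because $\bar e_i v_i^{\top}\bar e_i v_i^{\top} = (v_i^{\top}\bar e_i)\,\bar e_i v_i^{\top} = a_{i,i}\,\bar e_i v_i^{\top}$, so powers of the update stay proportional to $\bar e_i v_i^{\top}$ and the scalars form a geometric-type sum. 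Thus $(I-A_i)^{L_i} = I - r_i\,\bar e_i v_i^{\top}$ with $r_i$ as defined, which is the "elemental matrix" factorization alluded to in the section preamble.

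Next I would multiply these together in the order $M = \prod_{i=1}^n (I - r_i \bar e_i v_i^{\top})$ (right-to-left per the paper's convention) and track how $I-M$ looks entrywise. The key structural observation is that $\bar e_i v_i^{\top}$ has nonzero entries only in row $i$, and crucially $v_i^{\top}\bar e_j = a_{i,j}$; when forming products, a factor $(I - r_j \bar e_j v_j^{\top})$ sitting to the right of $\bar e_i v_i^{\top}$ contributes a correction term $\bar e_i v_i^{\top}\bar e_j v_j^{\top} = a_{i,j}\,\bar e_i v_j^{\top}$, which only matters when $j < i$ (for $j>i$ the ordering of the product means such cross terms act on already-processed coordinates in a controlled way, and for $j=i$ we have already absorbed that into $r_i$). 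Carefully bookkeeping this shows $I - M$ is lower-triangular-plus-corrections whose $(i,j)$ entry for $j \le i$ equals $r_i$ times a combination that telescopes; I expect the off-diagonal entries $(I-M)_{i,j}$ for $j<i$ to come out proportional to $r_i a_{i,j}$ and the diagonal entries to be exactly $r_i a_{i,i} = 1-\gamma_i$, while entries above the diagonal vanish.

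Now compare with $L^{-1}\Sigma$. Since $L$ is lower triangular with $i$-th row $(a_{i,1},\dots,a_{i,i-1}, r_i^{-1}, 0,\dots,0)$, left-multiplication by $L^{-1}$ is exactly the composition of the row operations that, applied to $\Sigma$, scale row $i$ by $r_i$ (there is the $r_i^{-1}$ on the diagonal of $L$) and subtract appropriate multiples of earlier rows — and these are precisely the operations that turn $I-M$ into $\Sigma$ according to the entrywise description above. Rather than invert $L$ explicitly, the cleanest finish is to verify the equivalent identity $L(I-M) = \Sigma$ directly: compute row $i$ of the left side using the entrywise formula for $I-M$, and check it equals row $i$ of $\Sigma$, namely $v_i^{\top} = \sum_j a_{i,j}\bar e_j^{\top}$. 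This is a finite calculation row by row; the diagonal entry works out because $r_i^{-1}\cdot(1-\gamma_i) = a_{i,i}$, and the off-diagonal entries cancel against the $a_{i,j}$ terms in the lower-triangular part of $L$. The main obstacle is the bookkeeping in the middle step: getting the combinatorics of the $n$-fold product right, in particular confirming that no spurious cross-terms survive above the diagonal and that the surviving lower-triangular corrections are exactly the single-step ones $r_i a_{i,j}$ with no higher-order contamination from triple products $\bar e_i v_i^{\top}\bar e_j v_j^{\top}\bar e_k v_k^{\top}$. I would handle this by an induction on the number of factors, maintaining the invariant that the partial product $\prod_{i=p}^{n}(I - r_i\bar e_i v_i^{\top})$ has the stated triangular structure on rows $p,\dots,n$ and is the identity on rows $1,\dots,p-1$.
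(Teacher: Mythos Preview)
Your setup is correct and elegant: the identity $(I-A_i)^{L_i}=I-r_i\bar e_i v_i^{\top}$ is exactly the right starting point, and your target $L(I-M)=\Sigma$ is precisely what the paper verifies. However, your structural expectations for $I-M$ are wrong, and the row-by-row check you describe would not go through. Concretely, already for $n=2$ one has
\[
I-M=r_1\bar e_1 v_1^{\top}+r_2\bar e_2 v_2^{\top}-r_1r_2a_{2,1}\bar e_2 v_1^{\top},
\]
so $(I-M)_{1,2}=r_1a_{1,2}$ is generically nonzero (entries above the diagonal do \emph{not} vanish), $(I-M)_{2,2}=r_2a_{2,2}-r_1r_2a_{2,1}a_{1,2}$ is \emph{not} $1-\gamma_2$, and $(I-M)_{2,1}=r_2a_{2,1}\gamma_1$ is not simply $r_2a_{2,1}$. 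In general the single-step term $r_i\bar e_i v_i^{\top}$ already populates all of row $i$, including columns $j>i$, and the cross terms with earlier factors modify the diagonal and sub-diagonal entries as well. Your worry about ``higher-order contamination'' is therefore misplaced: the issue is not that triple products spoil things, but that the first-order picture you sketched is already incorrect.

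The paper's proof follows the same overall route but handles the bookkeeping correctly: from the product of the $(I-A_i)^{L_i}$ it derives the \emph{recursive} formulas
\[
m_{i,j}=-r_i\sum_{k=1}^{i-1}a_{i,k}m_{k,j}+\begin{cases}0&j<i\\\gamma_i&j=i\\-r_ia_{i,j}&j>i\end{cases}
\]
and then observes that the row operations encoded by $L$ (add $a_{i,k}r_i$ times row $k$ to row $i$ for $k<i$, then scale row $i$ by $r_i^{-1}$) exactly annihilate the recursive sum $-r_i\sum_{k<i}a_{i,k}m_{k,j}$, leaving precisely $a_{i,j}$ in every position. Your induction on the number of factors could be made to work, but the invariant to maintain is this recursive description of the entries, not a triangular one.
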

\begin{proof}
Observe that row $i\neq j$ of $(I-A_j)^{L_j}$ is $\bar{e}_i^{T}$ while row $j$ has entries $(j,k)$ given by
\[
\begin{cases}
-a_{j,k}\sum_{s=0}^{L_j-1}(1-a_{j,j})^s& ,k\neq j\\
(1-a_{j,j})^{L_j}& ,k=j
\end{cases}=
\begin{cases}
-a_{j,k}r_j& ,k\neq j\\
\gamma_j & ,k=j
\end{cases}
\]
Then, each of $(I-A_j)^{L_j}$ is the product of $n$ elemental matrices. Indeed
\[
E^{j}_n\cdots E^{j}_1 I=(I-A_j)^{L_j}
\]
where $E^{j}_1$ is the matrix representing multiplication of the $j$-th row by $\gamma_j$, while the remaining matrices represent, for $k\neq j$, addition to row $j$ of $-a_{j,k}r_j$ times row $k$ (the order of these remaining $n-1$ operations is immaterial). In particular, all the $E^j_k$ act only on row $j$. 
Hence, if $M$ has entries $m_{i,j}$, from 
\[
M=E^{n}_n\cdots E^{n}_1 \cdots E^{1}_n\cdots E^{1}_1 I
\]
can be deduced the recursive formulas
\[
m_{i,j}=\begin{cases}
-\sum_{k=1}^{i-1}a_{i,k}r_im_{k,j} & , j\leq i-1 \\
\gamma_i -\sum_{k=1}^{i-1}a_{i,k}r_im_{k,i} &, j=i\\
-\sum_{k=1}^{i-1}a_{i,k}r_im_{k,j} -a_{i,j}r_i  &, j\geq i+1.
\end{cases}
\]

I now show there exist a sequence of row operations that reduce $I-M$ to $\Sigma$. Indeed, first note that $I-M$ has entries $\tilde{m}_{i,j}$ that satisfy
\[
\tilde{m}_{i,j}=\begin{cases}
\sum_{k=1}^{i-1}a_{i,k}r_im_{k,j} & , j\leq i-1 \\
1-\gamma_i +\sum_{k=1}^{i-1}a_{i,k}r_im_{k,i} &, j=i\\
\sum_{k=1}^{i-1}a_{i,k}r_im_{k,j} + a_{i,j}r_i  &, j\geq i+1
\end{cases}
\]
while also $\tilde{m}_{i,j}=-m_{i,j}$ for $i\neq j$ and $\tilde{m}_{i,i}=1-m_{i,i}$.
Then, for $s=n$ to $s=1$ consider acting on row $s$ of $I-M$ by adding $a_{s,k}r_s$ times row $k$, for all $k\leq s-1$, and then by multiplying row $s$ by $a_{s,s}(1-\gamma_{s})^{-1}$. The entries $c_{i,j}$ of the resulting matrix satisfy
\begin{align*}
c_{i,j}&=\frac{a_{i,i}}{1-\gamma_i}\left( \tilde{m}_{i,j}+\sum_{k=1}^{i-1}a_{i,k}r_i\tilde{m}_{k,j}\right) \\
&=\begin{cases}
\frac{a_{i,i}}{1-\gamma_i}\left( a_{i,j}r_im_{j,j}+a_{i,j}r_i(1-m_{j,j})\right) &, j\leq i-1\\
a_{i,i} &, j=i\\
a_{i,j} &, j\geq i+1
\end{cases} \\
&=
\begin{cases}
a_{i,j}&, j\leq i-1\\
a_{i,i} &, j=i\\
a_{i,j} &, j\geq i+1
\end{cases}
\end{align*}
which shows $I-M$ has been reduced to $\Sigma$ by multiplying on the left by the matrix
\begin{align*}
L&=\begin{bmatrix}
\frac{a_{1,1}}{1-\gamma_1} & 0 & 0 & \cdots & 0\\
\frac{a_{2,1}r_2a_{2,2}}{1-\gamma_2}&  \frac{a_{2,2}}{1-\gamma_2}& 0 & \cdots & 0\\
\frac{a_{3,1}r_3a_{3,3}}{1-\gamma_3}& \frac{a_{3,2}r_3a_{3,3}}{1-\gamma_3} & \frac{a_{3,3}}{1-\gamma_3}& \cdots & 0\\
\vdots & \vdots & \vdots & \ddots & \vdots\\
\frac{a_{n,1}r_na_{n,n}}{1-\gamma_n}& \frac{a_{n,2}r_na_{n,n}}{1-\gamma_n} & \frac{a_{n,3}r_na_{n,n}}{1-\gamma_n} & \cdots & \frac{a_{n,n}}{1-\gamma_n}
\end{bmatrix}\\
&=\begin{bmatrix}
r_1^{-1} & 0 & 0 & \cdots & 0\\
a_{2,1}&  r_2^{-1}& 0 & \cdots & 0\\
a_{3,1}& a_{3,2} & r_3^{-1}& \cdots & 0\\
\vdots & \vdots & \vdots & \ddots & \vdots\\
a_{n,1}& a_{n,2} & a_{n,3} & \cdots & r_n^{-1}
\end{bmatrix}
\end{align*}
as claimed. 
\end{proof}

Since $I-M=L^{-1}\Sigma$, and $L^{-1}$ is invertible, the next corollary follows at once.

\begin{corollary}
$I-M$ is invertible if and only if $\Sigma$ is invertible.
\end{corollary}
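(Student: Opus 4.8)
The plan is to read the corollary off directly from the identity $I-M=L^{-1}\Sigma$ established in the theorem above; the only point deserving a word of justification is that $L$, and hence $L^{-1}$, is invertible.

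First I would note that $L$ is lower triangular, so $\det L$ equals the product of its diagonal entries $l_{i,i}=r_i^{-1}=\frac{a_{i,i}}{1-\gamma_i}$. Each $a_{i,i}$ is nonzero by assumption (ii), and since $1-a_{i,i}\in(0,1)$ by assumption (iv) (equivalently, by the first Proposition), the numbers $\gamma_i=(1-a_{i,i})^{L_i}$ lie in $(0,1)$, so $1-\gamma_i>0$. Hence every diagonal entry $r_i^{-1}$ is nonzero, $\det L=\prod_{i=1}^{n}r_i^{-1}\neq 0$, and $L$ is invertible — which justifies the word \quot{invertible} attached to $L$ in the statement of Theorem \ref{main_thm}.

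With $L^{-1}$ invertible, I would finish by taking determinants in $I-M=L^{-1}\Sigma$: namely $\det(I-M)=\det(L^{-1})\,\det\Sigma$ with $\det(L^{-1})\neq 0$, so $\det(I-M)\neq 0$ if and only if $\det\Sigma\neq 0$. Equivalently, the composite $L^{-1}\Sigma$ of the linear map $\Sigma$ with the isomorphism $L^{-1}$ is bijective precisely when $\Sigma$ is. There is essentially no obstacle here: all the substance is carried by the identity $I-M=L^{-1}\Sigma$ proved above, and the argument just given merely records why the factor $L^{-1}$ does not affect invertibility.
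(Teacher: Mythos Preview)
Your proposal is correct and follows exactly the paper's approach: the paper simply remarks that since $I-M=L^{-1}\Sigma$ and $L^{-1}$ is invertible, the corollary follows at once. Your additional justification that $L$ is invertible (via its nonzero diagonal entries $r_i^{-1}$) is a welcome elaboration of a fact the paper takes as given.
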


\subsection{Study of $b$}

I now study the vector $b$. The idea is to look at $Lb$ which, given the structure of $b$, means considering the $(k-1)$-th column of the product between the effective training matrix $L$ and $\prod_{s=k}^{n}(I-A_s)^{L_s}$. Using again the special structure of the matrices $(I-A_k)^{L_k}$, and their relationship with $L$, it is possible to use induction on $k$ to keep track of the $(k-1)$-th column. This shows said column is simply a multiple of the standard unit vector $\bar{e}_{k-1}$.

Recall 
\[
b=\sum_{l=1}^nf_l\frac{1-(1-a_{l,l})^{L_l}}{a_{l,l}}\left[ \prod_{i=l+1}^{n}(I-A_i)^{L_i}\right]\bar{e}_l 
\]
and
\[
F=\sum_{i=1}^nF(T_i)=\sum_{i=1}^n f_i\bar{e}_i.
\] 

\begin{theorem}
$Lb=F$.
\end{theorem}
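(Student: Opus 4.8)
The plan is to compute $Lb$ directly, exploiting the fact that $b$ is a linear combination of the vectors $p_l := r_l \left[\prod_{i=l+1}^n (I-A_i)^{L_i}\right]\bar e_l$ weighted by $f_l$, so it suffices to show $L p_l = \bar e_l$ for each $l$, since then $Lb = \sum_l f_l \bar e_l = F$. The key observation I would isolate first is a structural one about the partial products $Q_k := \prod_{i=k}^n (I-A_i)^{L_i}$ (so $M = Q_1$ and $Q_{n+1}=I$): since each $(I-A_i)^{L_i}$ acts only on row $i$, and the factors in $Q_k$ carry indices $k, k+1, \dots, n$, the first $k-1$ rows of $Q_k$ are exactly the standard rows $\bar e_1^T, \dots, \bar e_{k-1}^T$. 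Consequently the $(l{-}1)$-th column of $Q_l$ — which is what $p_l$ is, up to the factor $r_l$ — has its first $l-1$ entries equal to those of $\bar e_{l-1}$, i.e. a single $1$ in position $l-1$ and zeros above. The entries in positions $l, l+1, \dots, n$ of this column are the quantities $m_{i,l-1}$ for $i \ge l$, which by the recursive formula for the entries of $M$ established in the previous theorem satisfy $m_{i,j} = -\sum_{k=1}^{i-1} a_{i,k} r_i m_{k,j}$ when $j \le i-1$.

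With $p_l$ identified, I would then compute $(L p_l)_i = \sum_j l_{i,j} (p_l)_i$ using the explicit lower-triangular form of $L$, namely $l_{i,j} = a_{i,j}$ for $j < i$, $l_{i,i} = r_i^{-1}$, and $l_{i,j}=0$ for $j>i$. For $i < l-1$ this is immediate since $p_l$ vanishes there. For $i = l-1$ the only surviving term is $l_{l-1,l-1}(p_l)_{l-1} = r_{l-1}^{-1} \cdot (p_l)_{l-1}$; but $(p_l)_{l-1}$ is $r_l$ times the $(l{-}1,l{-}1)$ entry of $Q_l$, and since rows $1,\dots,l-1$ of $Q_l$ are standard that entry is $1$ — wait, this would give $r_{l-1}^{-1} r_l$, not $1$, so I need to be more careful: in fact the factor $r_l$ belongs to the scaling in $b$ and the right normalization to check is that the $(l{-}1)$-th column of $L Q_l$ equals $r_l^{-1}\bar e_{l-1}$, equivalently that $L$ applied to that column reproduces the appropriate unit vector once the $r_l$ is accounted for. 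The cleanest route is therefore to prove by downward induction on $k$ that $L Q_k$ has its columns $1,\dots,k-1$ equal to $l_{1,1}\bar e_1, \dots$ — more precisely, to track that the $(k{-}1)$-th column of $Q_k$ is mapped by $L$ to a unit vector with the correct coefficient, using the recursion $Q_k = (I-A_k)^{L_k} Q_{k+1}$ together with the fact that $L$ and the elemental row operations defining $(I-A_k)^{L_k}$ interact in the same way they did in the proof of $I - M = L^{-1}\Sigma$.

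The main obstacle, as the hedging above reveals, is bookkeeping of the scalar normalizations: the $r_l$ factor sitting in the definition of $b$, the $r_i^{-1} = l_{i,i}$ on the diagonal of $L$, and the $\gamma_i$'s all have to line up exactly, and it is easy to be off by one index or by a factor of $r_l$. I expect the slickest presentation is to reuse the computation from the previous theorem almost verbatim: there it was shown that left-multiplying $I - M$ by $L$ performs, row by row from $n$ down to $1$, the operation ``add $a_{s,k} r_s$ times row $k$ for $k<s$, then multiply row $s$ by $a_{s,s}(1-\gamma_s)^{-1} = r_s^{-1} a_{s,s}/a_{s,s}$'' — no wait, by $r_s^{-1}\cdot r_s = $ — the point is that the same row operations applied to the relevant column of $\prod_{i=l+1}^n(I-A_i)^{L_i}$ collapse it to $r_l^{-1}\bar e_l$ by exactly the telescoping that produced $\Sigma$ from $I-M$. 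Once that parallel is set up, verifying $L p_l = \bar e_l$ is a one-line consequence, and summing against $f_l$ finishes the proof. I would keep the written proof short by invoking the elemental-matrix decomposition already in hand rather than re-deriving the entry formulas.
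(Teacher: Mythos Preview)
Your eventual approach --- prove by downward induction on $k$ that $LQ_k$ (with $Q_k=\prod_{s=k}^n(I-A_s)^{L_s}$) has the $(k-1)$-th column equal to $r_{k-1}^{-1}\bar e_{k-1}$, then read off $Lp_l=\bar e_l$ --- is exactly the paper's proof; the paper formulates the inductive claim as $LQ_k$ being block-diagonal with $[L]_{k-1}$ in the upper-left corner, which gives the same column identity. Your normalization confusion comes from an index slip: since $p_l=r_l\,Q_{l+1}\bar e_l$, the vector $p_l$ is $r_l$ times the $l$-th column of $Q_{l+1}$, not the $(l-1)$-th column of $Q_l$, so the diagonal entry you hit is $(LQ_{l+1})_{l,l}=l_{l,l}=r_l^{-1}$ and $Lp_l=r_l\cdot r_l^{-1}\bar e_l=\bar e_l$ cleanly.
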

\begin{proof}
Recall that row $i\neq j$ of $(1-A_j)^{L_j}$ is $\bar{e}_i^{T}$, and that row $j$ has entries $(j,k)$ given by
\[
\begin{cases}
-a_{j,k}r_j& ,k\neq j\\
\gamma_j & ,k=j.
\end{cases}
\]

The key of the proof is showing, for $k=2,\cdots,n+1$ the block diagonal structure
\[
L\prod_{s=k}^n(I-A_s)^{L_s}=\left[\begin{array}{c | c}
[L]_{k-1} &0 \\ \hline
0& B
\end{array}\right]
\]
where $[L]_{k-1}$ is the $k-1\times k-1$ matrix whose entries are as the entries $l_{i,j}$ of the matrix $L$ for $i,j\leq k-1$, and $B$ is some $(n-k+1)\times (n-k+1)$ matrix (note that when $k=n+1$ the product is simply the matrix $L$). Since the $(k-1)$-th column of the above matrix is $(r_{k-1})^{-1}\bar{e}_{k-1}$, it is immediately deduced that
\[
Lb=\sum_{l=1}^nf_lr_lL\left[ \prod_{i=l+1}^{n}(I-A_i)^{L_i}\right]\bar{e}_l =\sum_{l=1}^nf_lr_lr_l^{-1}\bar{e}_l=F.
\]

Now, the block structure is true for $k=n+1$, and if it is true for $k=k_0+1\in [3, n+1]$ then
\begin{multline*}
L\prod_{s=k_{0}}^n(I-A_s)^{L_s}=\left[\begin{array}{c | c}
[L]_{k_0} &0 \\ \hline
0& B
\end{array}\right](I-A_{k_0})^{L_{k_0}}\\=\left[\begin{array}{c | c}
[L]_{k_0} &0 \\ \hline
0& B
\end{array}\right]
\left[\begin{array}{c | c | c}
I_{k_0-1} &0 & 0\\ \hline
-a_{k_0,1}r_{k_0}\cdots & \gamma_{k_0} & \cdots -a_{k_0,n}r_{k_0}\\ \hline
0 & 0 & I_{n-k_0}
\end{array}\right]\\
=\left[\begin{array}{c | c | c}
[L]_{k_0-1} &0 & 0\\ \hline
c_{k_0,1}\cdots & c_{k_0,k_0} & \cdots c_{k_0,n}\\ \hline
0 & 0 & B
\end{array}\right]
\end{multline*}
where $I_j$ is the $j\times j$ identity matrix, and the entries $c_{k_0,j}$ can be computed for $j< k_0$ as
\[
c_{k_0,j}=\left( r_{k_0}^{-1}\bar{e}_{k_0}+\sum_{i=1}^{k_0-1}a_{k_0,i}\bar{e}_i\right) \cdot \left( -a_{k_0,j}r_{k_0}\bar{e}_{k_0}+\bar{e}_j\right) =-a_{k_0,j}+a_{k_0,j}=0.
\]
Hence
\[
L\prod_{s=k_{0}}^n(I-A_s)^{L_s} =\left[\begin{array}{c | c | c}
[L]_{k_0-1} &0 & 0\\ \hline
0 & c_{k_0,k_0} & \cdots c_{k_0,n}\\ \hline
0 & 0 & B
\end{array}\right]
\]
and the result holds for $k=k_0$. This finishes the proof. 
\end{proof}

\section{Asymptotic values}\label{eigenvalues}

I have shown the weights $y(m)=w(mT_n)$ evolve according to
\[
y(m)=My(m-1)+b \Leftrightarrow y(m)=M^{m}w(0)+\sum_{s=0}^{m-1}M^{s}b.
\]
where
\[
I-M=L^{-1}\Sigma \quad , \quad b=L^{-1}F.
\]

The behavior of the system is tied to the eigenvalues of the monodromy matrix $M$, which are the Floquet multipliers of the system. Recall that for some diagonal matrix $D$ with positive entries, $K=D\Sigma$ is symmetric and non-negative definite.

\begin{theorem}
If $\mu$ is an eigenvalue of $M$ then $\abs{\mu}\leq 1$. Moreover, if $\abs{\mu}=1$ then $\mu=1$, and this is a non-defective eigenvalue whose corresponding eigenspace coincides with $\ker(\Sigma)$.
\end{theorem}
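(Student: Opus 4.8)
The plan is to use the identity $M=I-L^{-1}\Sigma$ proved above, so that $(\mu,v)$ is an eigenpair of $M$ (with $v\in\mathbb{C}^n\setminus\{0\}$) exactly when
\[
\Sigma v=(1-\mu)\,Lv .
\]
Setting $\nu=1-\mu$ and pairing this relation on the left with $v^{*}D$, I would use that $K=D\Sigma$ is symmetric and non-negative definite to obtain
\[
v^{*}Kv=\nu\,v^{*}DLv ,
\]
where $v^{*}Kv\ge 0$ is a nonnegative real number. The entire argument then comes down to controlling $\mathrm{Re}(v^{*}DLv)$, i.e.\ the quadratic form of the symmetric part of $DL$.

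The crux — and the step I expect to be the main obstacle — is the identity
\[
DL+L^{T}D=K+B,\qquad B=\mathrm{diag}\!\left(d_1\beta_1,\dots,d_n\beta_n\right),\quad \beta_i=\frac{a_{i,i}(1+\gamma_i)}{1-\gamma_i}>0,
\]
where $d_i>0$ are the diagonal entries of $D$. Here the two structural inputs meet: symmetry of $K=D\Sigma$ forces the off-diagonal entries of $DL+L^{T}D$ (which collect the strictly-lower entries $a_{i,j}$ of $\Sigma$ from $L$ and the strictly-upper ones from $L^{T}$) to coincide with those of $K$, while the specific diagonal entries $l_{i,i}=r_i^{-1}$ of the effective training matrix make the diagonal of $DL+L^{T}D$ exceed that of $K$ by exactly $d_i\beta_i>0$ (using $a_{i,i}\in(0,1)$ and $\gamma_i\in(0,1)$). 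Granting this, $\mathrm{Re}(v^{*}DLv)=v^{*}\frac{DL+L^{T}D}{2}v=\frac12\big(v^{*}Kv+v^{*}Bv\big)$, with $v^{*}Bv>0$ for every $v\neq 0$ since $B$ is positive definite.

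With these facts I would argue by cases. If $v^{*}Kv=0$, then non-negative definiteness gives $Kv=0$, hence $\Sigma v=0$ (as $D$ is invertible), hence $\nu Lv=0$ and therefore $\nu=0$, i.e.\ $\mu=1$; moreover $v\in\ker\Sigma$. If $v^{*}Kv>0$, then $\nu\neq 0$ and $1/\nu=v^{*}DLv/(v^{*}Kv)$, so
\[
\mathrm{Re}(1/\nu)=\frac{v^{*}Kv+v^{*}Bv}{2\,v^{*}Kv}=\frac12+\frac{v^{*}Bv}{2\,v^{*}Kv}>\frac12 ,
\]
and the elementary equivalence $\mathrm{Re}(1/\nu)>\frac12\iff|1-\nu|<1$ yields $|\mu|<1$. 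Thus every eigenvalue of $M$ is either $1$ or strictly inside the unit disk, which gives $|\mu|\le 1$ and $|\mu|=1\Rightarrow\mu=1$. The eigenspace of $\mu=1$ is $\{v:L^{-1}\Sigma v=0\}=\ker\Sigma$ because $L$ is invertible.

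For non-defectiveness I would rule out a Jordan chain of length two: if $v_1\in\ker\Sigma\setminus\{0\}$ and $(M-I)v_2=v_1$, i.e.\ $\Sigma v_2=-Lv_1$, then on one hand $v_1^{*}Kv_2=0$, since $Kv_1=D\Sigma v_1=0$ and $K$ is symmetric; on the other hand $v_1^{*}Kv_2=v_1^{*}D\Sigma v_2=-v_1^{*}DLv_1$, whose real part equals $\frac12\big(v_1^{*}Kv_1+v_1^{*}Bv_1\big)=\frac12 v_1^{*}Bv_1>0$ — a contradiction. Hence $\mu=1$ admits no nontrivial Jordan block. The remaining details (the entrywise verification of $DL+L^{T}D=K+B$ and the equivalence $\mathrm{Re}(1/\nu)>\frac12\iff|1-\nu|<1$) are routine.
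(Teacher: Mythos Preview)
Your proof is correct and rests on the same key identity as the paper's: the symmetric part of $DL$ equals $\tfrac{1}{2}(K+B)$ with $B$ positive definite diagonal (the paper states this as $x^{T}DLx=\tfrac{1}{2}x^{T}Kx+c(x)$ with $c(x)>0$ for $x\neq 0$, which is your identity with $c(x)=\tfrac12 x^{T}Bx$), and the non-defectiveness argument is identical. The one cosmetic difference is that you handle complex eigenvalues directly via $\mathrm{Re}(1/\nu)>\tfrac12\iff|1-\nu|<1$, whereas the paper splits $v=u+iw$ into real and imaginary parts and manipulates the resulting pair of real equations to reach the same conclusion.
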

\begin{proof}
First, note $\mu$ is an eigenvalue of $M$ if and only if $\eta=1-\mu$ is an eigenvalue of $L^{-1}\Sigma$ and that
\[
L^{-1}\Sigma y=\eta y \Leftrightarrow \Sigma y=\eta Ly \Leftrightarrow K y=\eta DL y.
\]
Also, recall that $L$ is lower triangular and that the entries below the diagonal are the same as those of $\Sigma$. Hence, if $k_{i,j}$ are the entries of $K$, and $x_i$ are the components of $x\in \R^n$,
\[
x^TKx=\sum_{i,j=1}^nk_{i,j}x_ix_j=\sum_{i=1}^n k_{i,i}x_i^2+2\sum_{j<i}k_{i,j}x_ix_j
\]
and so, if $d_i$ are the diagonal entries of $D$, 
\begin{equation}\label{main_id}
x^TDLx=\sum_{j<i}k_{i,j}x_ix_j+\sum_{i=1}^n d_ir_i^{-1}x_i^2=\frac{1}{2}x^TKx +\sum_{i=1}^n (d_ir_i^{-1}-\frac{1}{2}k_{i,i})x_i^2.
\end{equation}
Note $d_ir_i^{-1}-\frac{1}{2}k_{i,i}$ are positive since $d_i$ and $r_i^{-1}-\frac{1}{2}a_{i,i}$ are positive. To ease notation, define
\[
c(x)=\sum_{i=1}^n (d_ir_i^{-1}-\frac{1}{2}k_{i,i})x_i^2
\]
and observe $c(x)$ is positive if $x\neq 0$.

Now, let $\mu =a+bi$ be an eigenvalue of $M$ with eigenvector $z=u+iv$, $a,b\in \R$, $u,v\in \R^n$, and assume first $b\neq 0$.
From $Kz=(1-\mu)DLz$ follows that
\begin{align*}
Ku&=(1-a)DLu+bDLv\\
Kv&=(1-a)DLv-bDLu
\end{align*}
so that multiplying each equation by $u^T$ and $v^T$, and using that $u^TKv=v^TKu$, it can be deduced that
\begin{multline*}
\frac{1-a}{b}\left[ u^TKu-(1-a)u^TDLu\right] -bu^TDLu\\=-\frac{1-a}{b}\left[ v^TKv-(1-a)v^TDLv\right] +bv^TDLv.
\end{multline*}
Then, using equation \eqref{main_id},
\begin{multline*}
\left[ \frac{1-a}{b}-\frac{(1-a)^2+b^2}{2b}\right] u^TKu-\left[ \frac{(1-a)^2+b^2}{b}\right] c(u)
\\=
-\left[ \frac{1-a}{b}-\frac{(1-a)^2+b^2}{2b}\right] v^TKv+\left[ \frac{(1-a)^2+b^2}{b}\right] c(v)
\end{multline*}
so
\[
\left[ \frac{1-a}{b}-\frac{(1-a)^2+b^2}{2b}\right]\left( u^TKu+v^TKv\right) =\left[ \frac{(1-a)^2+b^2}{b}\right]\left( c(v)+c(u)\right) 
\]
or
\[
\frac{1-(a^2+b^2)}{2}\left( u^TKu+v^TKv\right) =((1-a)^2+b^2)\left( c(v)+c(u)\right). 
\]
Since $b\neq 0$, the right hand side is strictly positive. Hence, since $u^TKu+v^TKv$ is non-negative, there follows $\abs{\mu}< 1$. 
In the case $b=0$ it may be assumed $z$ is real and a simple computation gives
\[
\frac{1+a}{2}z^TKz=(1-a)c(z),
\]
from where $\mu=a\in (-1,1]$. Moreover, if $\mu=1$ then $Kz=0$ so $z\in \ker(\Sigma)$ since $D$ is invertible.

Finally, I show that if $\mu=1$ is an eigenvalue of $M$ then it is non-defective, so the corresponding generalized eigenspace is simply $\ker(I-M)=\ker(\Sigma)$.
To this end, it suffices to show $(M-I)^2y=0$ implies $(M-I)y=0$. Hence, let $(I-M)y=x$ with $(I-M)x=0$. Since $I-M=L^{-1}\Sigma$ it follows that
\[
 Kx=0 \quad , \quad Ky=DLx
\]
and so
\[
x^TKy=x^TDLx=\frac{1}{2}x^TKx+c(x).
\]
But both $x^TKy$ and $x^TKx$ are zero, and so $x=0$. This concludes the proof.
\end{proof}

The main result can now be proven.

\begin{proof}[Proof of Theorem \ref{main_thm}]
It suffices to prove the second statement since then the first statement follows from the fact that if $\Sigma$ is invertible then $\ker \Sigma$ is trivial and so $P=I$.

Recall 
\[
y(m)=M^mw(0)+\sum_{s=0}^{m-1}M^{s}b.
\]
and note
\[
L^{-1}\Sigma \sum_{s=0}^{m-1}M^{s}=(I-M) \sum_{s=0}^{m-1}M^s=(I-M^{m}).
\]
so that
\[
\Sigma y(m)=\Sigma M^m w(0)+L(I-M^m)b.
\]
Using that $M^m P x\rightarrow 0$, $M^m(I-P)x=(I-P)x$, and $\Sigma(I-P)x=0$ (since $(I-P)x$ lies in $\ker \Sigma$) it follows that
\[
\Sigma y(m)=M^m P w(0)+Lb-L(M^mPb+(I-P)b)\rightarrow Lb-L(I-P)b=LPb.
\]
\end{proof}

Before moving onto the final section, I briefly discuss how an explicit expression for $P$ can be obtained. Suppose $r(\mu)$ is the minimal polynomial of $M=I-L^{-1}\Sigma$ and let $q(\mu)=r(\mu)/(\mu-1)$. Since $\mu=1$ is a non-defective eigenvalue, and $(I-P)x$ belongs to the corresponding eigenspace, it follows that 
\[
q(M)(I-P)x=q(1)(I-P)x \quad , \quad q(M)Px=0
\]
and so
\[
(I-P)x=\frac{q(M)}{q(1)}x \Leftrightarrow  Px=\left( I-\frac{q(M)}{q(1)}\right) x.
\]

\section{An application}\label{application}

In this section I contrast the theory developed so far with results obtained in \cite{A24} regarding the discrimination training known as \quot{biconditional discrimination} \cite{S75}. In this experimental design, four conditioned stimuli $A$, $B$, $X$, $Y$ are combined into the four compounded conditioned stimuli $AY$, $AX$, $BY$, $BX$, and experimental subjects are consistently and repeatedly trained in training blocks where, consecutively, $AY$ is reinforced, $AX$ is non-reinforced, $BY$ is non-reinforced, and $BX$ is reinforced.
There is evidence that organisms can solve the discrimination, i.e., they show greater behavioral responses to the $AY$ and $BX$ compounds than to the $AX$ and $BY$ compounds \cite{S75}. From the point of view of models, this would mean the associative values of $AY$ and $BX$ should tend to $\lambda$, while the associative values of $AX$ and $BY$ should tend to $0$.

In \cite{A24}, biconditional discrimination was studied under some simplifying assumptions regarding the learning rate parameters and in the context of three different models of associative learning: Rescorla-Wagner's model (RWM), Pearce's model (P94), and the Inhibited Elements Models (IEM) (see the cited work for a description of these models). By explicitly solving the equations, and computing the respective limits, it was shown that P94 and IEM successfully predict that the discrimination will be solved, while RWM fails to make such a prediction. Indeed, it was obtained that both P94 and IEM predict that, at the end of each training block, the associative values of $AY$ and $BX$ converge to $\lambda$ while the associative values of $AX$ and $BY$ converge to $0$. In contrast, it was shown that RWM predicts the associative values of $AX$ and $BY$ converge to $\lambda/2$ while the associative values of $AY$ and $BX$ converge to multiples of $\lambda$ with coefficients that depend on the values of the parameters of learning. 

The objective here is to show that the same results can be obtained by means of Theorem \ref{main_thm}. Moreover, a comparison with the method used in \cite{A24} highlights at least three advantages of the method proposed here. Firstly, the correct prediction made by P94 and IEM is obtained here by showing that the corresponding matrices $K$ are invertible, which contrasts in its simplicity to the process of solving the associated equations to determine the limits. Moreover, since solving the equations is a complicated process, the method in \citep{A24} imposed the assumption of uniform learning rate parameters, an assumption not needed here to obtain the desired conclusion regarding P94 an IEM. Lastly, while the assumption of uniform learning rate parameters is used here to study RWM, the computations performed are much shorter and straightforward than the ones performed in \citep{A24}.

Now, let $x_1=AY$, $x_2=AX$, $x_3=BY$, and $x_4=BX$, so that the training vector is
\[
F=\begin{bmatrix}
\delta_1\lambda\\
0\\
0\\
\delta_4 \lambda
\end{bmatrix}.
\]
In the models under consideration, computations depends on learning rate parameters $\beta_{+},\beta_{-}\in (0,1)$ associated to reinforcement and non-reinforcement, respectively. Also, if $Z$ is any of the four single stimuli or the four compounds, then $Z$ has its own learning rate parameter $\alpha_Z$. 
Then, for both RWM and IEM the parameters to be used are $\delta_1=\delta_4=\beta_+$ and $\delta_2=\delta_3=\beta_{-}$, while for P94 they are $\delta_1=\beta_+\alpha_{AY}$, $\delta_2=\beta_-\alpha_{AX}$, $\delta_3=\beta_-\alpha_{BY}$, and $\delta_4=\beta_+\alpha_{BX}$.

Computation of the corresponding kernels (see \cite{A24} for details) leads to 
\[
K_{RWM}=\begin{bmatrix}
\alpha_A+\alpha_Y & \alpha_A 		& \alpha_Y  			& 0\\
\alpha_A		& \alpha_A+\alpha_X & 0					 & \alpha_X\\
\alpha_Y			& 0 			& \alpha_B+\alpha_Y & \alpha_B\\
0			& \alpha_X					& \alpha_B 			& \alpha_B+\alpha_X
\end{bmatrix}
\]
\[
K_{IEM}=\begin{bmatrix}
\frac{\alpha_A+\alpha_Y}{2} & \frac{\alpha_A}{3} & \frac{\alpha_Y}{3} & 0\\
\frac{\alpha_A}{3} & \frac{\alpha_A+\alpha_X}{2} & 0 & \frac{\alpha_X}{3}\\
\frac{\alpha_Y}{3} & 0 & \frac{\alpha_B+\alpha_Y}{2} & \frac{\alpha_B}{3}\\
0 & \frac{\alpha_X}{3} & \frac{\alpha_B}{3} & \frac{\alpha_B+\alpha_X}{2}
\end{bmatrix}
\]
\[
K_{P94}=\begin{bmatrix}
1 & 1/4 & 1/4 & 0\\
1/4 & 1 & 0 & 1/4\\
1/4 & 0 & 1 & 1/4\\
0 & 1/4 & 1/4 & 1
\end{bmatrix}
\]
as corresponding matrix $K$ for RWM, IEM, and P94, respectively. 
It can be verified that $K_{IEM}$ and $K_{P94}$ are invertible, so both IEM and P94 predict that the discrimination will be solved. 
On the other hand, $K_{RWM}$ is not invertible (its kernel is generated by $\langle 1, -1 , -1 , 1\rangle$). 

In order to find the asymptotic associative values predicted by RWM, while keeping the computations tractable, I make the assumption of uniform learning rate parameters $\beta_-=\beta_+=\beta$ and $\alpha_A=\alpha_B=\alpha_X=\alpha_Y=\alpha$ that was used in \cite{A24}.
Under this simplifying assumption, and writing $\gamma=\alpha\beta$ for simplicity,
\[
\Sigma=
\begin{bmatrix}
2\gamma & \gamma		& \gamma 			& 0\\
\gamma	& 2\gamma & 0					 & \gamma\\
\gamma	& 0 			& 2\gamma & \gamma\\
0			& \gamma				& \gamma		& 2\gamma
\end{bmatrix} \ , \ 
L=\begin{bmatrix}
1 & 0 & 0 & 0 \\
\gamma& 1 & 0					 & 0\\
\gamma		& 0 			& 1 & 0\\
0			& \gamma				& \gamma			& 1
\end{bmatrix} \ , \
L^{-1}=\begin{bmatrix}
1 & 0 & 0 & 0 \\
-\gamma		& 1 & 0					 & 0\\
-\gamma			& 0 			& 1 & 0\\
2\gamma^2		& -\gamma				& -\gamma			& 1
\end{bmatrix}
\]
and so
\[
M=\begin{bmatrix}
1-2\gamma & -\gamma & - \gamma & 0\\
-\gamma(1-2\gamma) & 1-2\gamma+\gamma^2 & \gamma^2 & -\gamma\\
-\gamma(1-2\gamma) &\gamma^2 & 1-2\gamma+\gamma^2 & - \gamma\\
-2\gamma^2(-1+2\gamma)&-\gamma(1-2\gamma+2\gamma^2)&-\gamma(1-2\gamma+2\gamma^2)&1-2\gamma+2\gamma^2
\end{bmatrix}.
\]
This matrix has eigenvalues $1$, $1-2\gamma$, and $(1-2\gamma)^2$, the second one being of multiplicity $2$ and non-degenerate. Hence, the predicted asymptotic associative values are
\[
V=\frac{1}{\beta}L P L^{-1}F
\]
where, by the comments at the end of the previous section,
\[
P=I-\frac{(M-(1-2\gamma)I)(M-(1-2\gamma)^2)}{(1-(1-2\gamma))(1-(1-2\gamma)^2)}.
\]
Since 
\[
(M-(1-2\gamma)I)(M-(1-2\gamma)^2)=\gamma^2\begin{bmatrix}
2(1-2\gamma)& -2(1-\gamma) & -2(1-\gamma) & 2\\
-2(1-2\gamma)& 2(1-\gamma) & 2(1-\gamma) & -2\\
-2(1-2\gamma)& 2(1-\gamma) & 2(1-\gamma) & -2\\
2(1-2\gamma)& -2(1-\gamma) & -2(1-\gamma) & 2
\end{bmatrix}
\]
it follows
\[
P=\begin{bmatrix}
\frac{3-2\gamma}{4(1-\gamma)}& 1/4 & 1/4 & -\frac{1}{4(1-\gamma)}\\
\frac{1-2\gamma}{4(1-\gamma)}& 3/4 & -1/4 & \frac{1}{4(1-\gamma)}\\
\frac{1-2\gamma}{4(1-\gamma)}& -1/4 & 3/4 & \frac{1}{4(1-\gamma)}\\
-\frac{1-2\gamma}{4(1-\gamma)}& 1/4 & 1/4 & \frac{3-4\gamma}{4(1-\gamma)}
\end{bmatrix}
\]
and so
\[
V=\lambda\begin{bmatrix}
\frac{3-4\gamma}{4(1-\gamma)}& \frac{1}{4(1-\gamma)} & \frac{1}{4(1-\gamma)} & -\frac{1}{4(1-\gamma)}\\
1/4& 3/4 & -1/4 & 1/4\\
1/4& -1/4 & 3/4 & 1/4\\
-\frac{1-2\gamma}{4(1-\gamma)}& \frac{1-2\gamma}{4(1-\gamma)} & \frac{1-2\gamma}{4(1-\gamma)} & \frac{3-2\gamma}{4(1-\gamma)}
\end{bmatrix}
\begin{bmatrix}
1\\0\\0\\1
\end{bmatrix}=\lambda\begin{bmatrix}
\frac{1-2\gamma}{2(1-\gamma)}\\
1/2\\
1/2\\
\frac{1}{2(1-\gamma)}
\end{bmatrix}.
\]
These are the exact values found in \cite{A24}.

Figure 1 shows, in a concrete example, the evolution of associative values for $AY$, $AX$, $BY$, and $BX$, according to the models P94, IEM, and RWM. As expected, the plots for P94 and IEM show the associative values of reinforced compounds converge to maximal associative value, while the associative values of non-reinforced compounds converge to zero. Similarly, the plot for RWM shows the model cannot account for the discrimination, and that the associative values converge to the predicted values $V(AY)=1/3$, $V(AX)=V(AY)=1/2$, and $V(BX)=2/3$, given the choices $\lambda=1$ and $\alpha=\beta=1/2$ used in the simulation. 

\begin{figure}[h]
	\begin{center}
		\includegraphics*[width=0.49\textwidth]{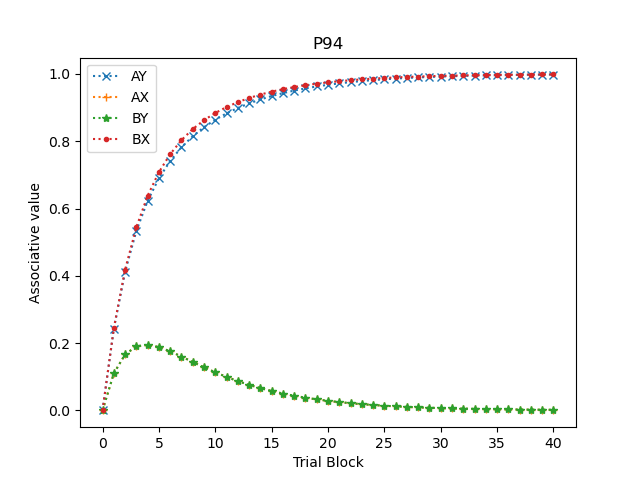}
		\includegraphics*[width=0.49\textwidth]{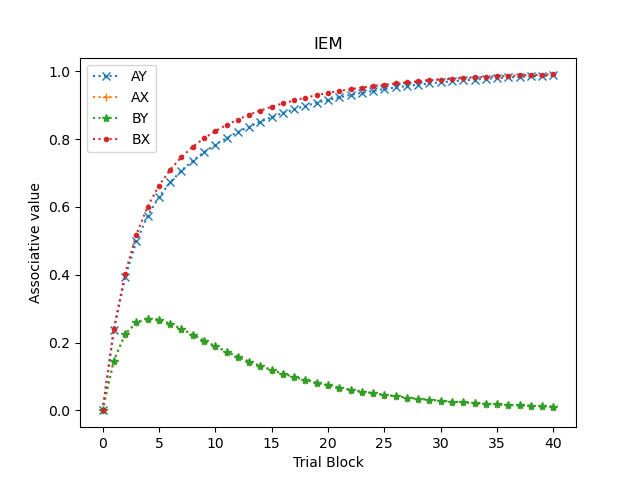}
		
		\includegraphics*[width=0.5\textwidth]{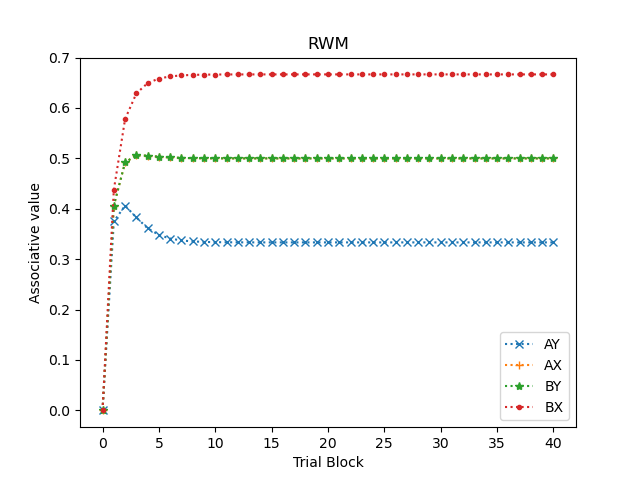}
	\end{center}
	\caption{evolution of associative values for $AY$, $AX$, $BY$, and $BX$, under a biconditional discrimination according to the models P94, IEM, and RWM. The plots show the evolution of associative values $V=Kw$, where $w$ obeys equation \eqref{eq}, across $40$ blocks of training. Uniform parameters $\lambda=1$, $\beta=0.5$, and $\alpha=0.5$ where chosen in all simulations. Note in all cases the curves for $AX$ and $BY$ are nearly indistinguishable.}
	\label{figure_1}
\end{figure}

\section*{Disclosure statement}

The authors report there are no competing interests to declare.

\section*{Funding}

This research did not receive any specific grant from funding agencies in the public,
commercial, or not-for-profit sectors.

\bibliographystyle{tfs}
\bibliography{biblio}

@article{NNS08,
author = {T. Naito and P. H. A. Ngoc and J. S. Shin},
title = {{Floquet representations and asymptotic behavior of solutions to periodic linear difference equations}},
volume = {38},
journal = {Hiroshima Mathematical Journal},
number = {1},
publisher = {Hiroshima University, Mathematics Program},
pages = {135 -- 154},
keywords = {asymptotic behavior of solution, bounded solution, Floquet representation of solution, index of growth order, Periodic linear difference equation, periodic solution},
year = {2008},
doi = {10.32917/hmj/1207580348},
URL = {https://doi.org/10.32917/hmj/1207580348}
}

@ARTICLE{LY97,
  author={Lindquist, A. and Yakubovich, V.A.},
  journal={IEEE Transactions on Automatic Control}, 
  title={Optimal damping of forced oscillations in discrete-time systems}, 
  year={1997},
  volume={42},
  number={6},
  pages={786-802},
  keywords={Damping;Regulators;Optimal control;Cost function;Frequency;Control systems;Stochastic processes;Vibrations;Noise reduction;Force control},
  doi={10.1109/9.587333}}

@article{DT98,
title = {Stabilization of Linear Discrete-Time Periodic Systems},
journal = {IFAC Proceedings Volumes},
volume = {31},
number = {18},
pages = {485-490},
year = {1998},
note = {5th IFAC Conference on System Structure and Control 1998 (SSC'98), Nantes, France, 8-10 July},
issn = {1474-6670},
doi = {https://doi.org/10.1016/S1474-6670(17)42038-6},
url = {https://www.sciencedirect.com/science/article/pii/S1474667017420386},
author = {Carlos E. {de Souza} and Alexandre Trofino},
keywords = {Linear periodic systems, uncertain systems, quadratic stabilization, periodic state feedback, periodic output feedback},
abstract = {The problem of stabilization of linear discrete-time periodic systems is considered. LMI based stabilization conditions via static periodic state feedback as well as via static periodic output feedback are presented. In the case of state feedback, the conditions are necessary and sufficient whereas for output feedback the result is only sufficient and depends on the particular state-space representation used to describe the system. The problem of quadratic stabilization in the presence of either norm-bounded or polytopic parameter uncertainty is also treated. As an application of the output feedback stabilization technique, we consider the problem of designing a stabilizing (respectively, quadraticaliy stabilizing) static periodic output feedback controller for linear time-invariant discrete-time systems which are not stabilizable (respectively, quadraticaliy stabilizable) by static constant output feedback.
Résumé
On considère le problème de la stabilisation d'un système linéaire périodique en temps discret. Des conditions de stabilisation par retour statique périodique, d'état ou de sortie, basées sur des LMI, sont présentées. Dans le cas de retour d'état, les conditions sont nécessaires et suffisantes, alors que dans le cas de retour de sortie, elle ne sont que suffisantes et dépendent de la représentation d'état particuliére qui est utilisée pour le systéme. Le probléme de stabilisation quadratique en présence d'incertitudes, soit bornées en norme, soit à paramètres dans un polytope, est aussi traité. En application de la technique de stabilisation par retour de sortie, on considère le problème de la conception d'une commande stabilisante (respectivement quadratiquement stabilisante) par retour de sortie statique périodique pour des systèmes discrets linéaires invariants dans le temps qui ne sont pas stabilisables (respectivement quadratiquement stabilisables) par retour de sortie constant.}
}

@article{K22,
  title={Dispersal-induced growth in a time-periodic environment},
  author={Katriel, Guy},
  journal={Journal of Mathematical Biology},
  volume={85},
  number={3},
  pages={24},
  year={2022},
  publisher={Springer}
}

@ARTICLE{SGMNP16,
  author={Saublet, Louis-Marie and Gavagsaz-Ghoachani, Roghayeh and Martin, Jean-Philippe and Nahid-Mobarakeh, Babak and Pierfederici, Serge},
  journal={IEEE Transactions on Industrial Electronics}, 
  title={Asymptotic Stability Analysis of the Limit Cycle of a Cascaded DC–DC Converter Using Sampled Discrete-Time Modeling}, 
  year={2016},
  volume={63},
  number={4},
  pages={2477-2487},
  keywords={Load modeling;Analytical models;Stability analysis;Switching frequency;Switches;Power harmonic filters;Asymptotic stability;Tightly controlled load;nonlinear control systems;Floquet multipliers;Discrete-time modeling;Floquet multipliers;nonlinear control systems;stability analysis;tightly controlled load},
  doi={10.1109/TIE.2015.2509908}
}

@article{JSW07,
  title={A tutorial on kernel methods for categorization},
  author={J{\"a}kel, Frank and Sch{\"o}lkopf, Bernhard and Wichmann, Felix A},
  journal={Journal of Mathematical Psychology},
  volume={51},
  number={6},
  pages={343--358},
  year={2007},
  publisher={Elsevier}
}

@book{Pavlov1927,
  title={Conditioned reflexes},
  author={Pavlov, I. P.},
  year={1927},
  publisher={Oxford university press}
}

@article{A24,
  title={On the mathematical formalization of the Inhibited Elements Model},
  author={Aguirre, Natham},
  journal={Journal of Mathematical Psychology},
  volume={123},
  pages={102887},
  year={2024},
  publisher={Elsevier}
}

@article{S75,
  title={Pavlovian compound conditioning in the rabbit},
  author={Saavedra, Maria A},
  journal={Learning and Motivation},
  volume={6},
  number={3},
  pages={314--326},
  year={1975},
  publisher={Elsevier}
}

@article{W03,
Abstract = {Reveals problems with associated learning enough for J.M. Pearce and W.K. Estes, contributors to the theory, to defect from it.  Influence of context on learning and retrieval; Proposal for a variation on an elemental model; Quantitative relationship to the 'product rule.'},
Author = {Wagner, Allan R.},
ISSN = {02724995},
Journal = {Quarterly Journal of Experimental Psychology: Section B},
Keywords = {Paired associate learning, Context effects (Psychology)},
Number = {1},
Pages = {7},
Title = {Context-sensitive elemental theory.},
Volume = {56},
URL = {https://search.ebscohost.com/login.aspx?},
Year = {2003},
}

@article{G20,
Abstract = {This article briefly reviews 3 theories concerning elemental and configural approaches to stimulus representation in associative learning and presents a new context-dependent added-elements model (C-AEM). This model takes an elemental approach to stimulus representation where individual stimuli are represented by single units and stimulus compounds activate both those units and configurational units corresponding to each conjunction of 2 or more stimuli. Activity across these units is scaled such that each stimulus always contributes the same amount of activity to the system whether it is presented in isolation or in compound; the configurational units 'borrow' activity from representational units for individual stimuli (and from each other). This scaling is affected by the extent to which stimuli interact with each other perceptually. Hence, the model is conceptually similar to Wagner’s (2003) replaced elements model but lacks features that explicitly code for the absence of stimuli },
Author = {George, David N.},
ISSN = {2329-8456},
Journal = {Journal of Experimental Psychology: Animal Learning and Cognition},
Keywords = {configural, elemental, generalization, Pavlovian conditioning, discrimination learning, Classical Conditioning, Discrimination Learning, Generalization (Learning), Simulation, Theories, Associative Processes, Computer Simulation, Learning Theory},
Number = {3},
Pages = {185 - 204},
Title = {The representation of stimulus conjunction in theories of associative learning: A context-dependent added-elements model.},
Volume = {46},
Year = {2020},
}

@article {G15,
    AUTHOR = {Ghirlanda, Stefano},
     TITLE = {On elemental and configural models of associative learning},
  JOURNAL = {Journal of Mathematical Psychology},
    VOLUME = {64/65},
      YEAR = {2015},
     PAGES = {8--16},
      ISSN = {0022-2496,1096-0880},
   MRCLASS = {91E40},
  MRNUMBER = {3322831},
       DOI = {10.1016/j.jmp.2014.11.003},
       URL = {https://doi.org/10.1016/j.jmp.2014.11.003},
}

@article {G18,
    AUTHOR = {Ghirlanda, Stefano},
     TITLE = {Studying associative learning without solving learning
              equations},
  JOURNAL = {Journal of Mathematical Psychology},
    VOLUME = {85},
      YEAR = {2018},
     PAGES = {55--61},
      ISSN = {0022-2496,1096-0880},
   MRCLASS = {91E40},
  MRNUMBER = {3852581},
       DOI = {10.1016/j.jmp.2018.07.003},
       URL = {https://doi.org/10.1016/j.jmp.2018.07.003},
}

@article{WM97,
  title={What's elementary about associative learning?},
  author={Wasserman, Edward A and Miller, Ralph R},
  journal={Annual review of psychology},
  volume={48},
  number={1},
  pages={573--607},
  year={1997},
  publisher={Annual Reviews 4139 El Camino Way, PO Box 10139, Palo Alto, CA 94303-0139, USA}
}

@article{A25,
  title={A mathematical formalization of the replaced elements model},
  author={Aguirre, Natham},
  journal={Behavior Research Methods},
  volume={57},
  number={11},
  pages={318},
  year={2025},
  doi = {10.3758/s13428-025-02858-1},
  publisher={Springer}
}

@article{ST11,
  title={Analysis of periodic nonautonomous inhomogeneous systems},
  author={Slane, Jean and Tragesser, Steven},
  journal={Nonlinear Dynamics and Systems Theory},
  volume={11},
  number={2},
  pages={183--198},
  year={2011}
}

@article{TL20,
  title={Inhibited elements model—implementation of an associative learning theory},
  author={Thorwart, Anna and Lachnit, Harald},
  journal={Journal of Mathematical Psychology},
  volume={94},
  pages={102310},
  year={2020},
  publisher={Elsevier}
}

@article{BVW00,
  title={A componential view of configural cues in generalization and discrimination in Pavlovian conditioning},
  author={Brandon, Susan E and Vogel, Edgar H and Wagner, Allan R},
  journal={Behavioural brain research},
  volume={110},
  number={1-2},
  pages={67--72},
  year={2000},
  publisher={Elsevier}
}

@article{RW72,
  title={A theory of Pavlovian conditioning: Variations in the effectiveness of reinforcement and non-reinforcement},
  author={Rescorla, Robert A and Wagner, A R},
  journal={Classical conditioning, Current research and theory},
  volume={2},
  pages={64--69},
  year={1972},
  publisher={Appleton-Century-Crofts}
}

@article{P94,
  title={Similarity and discrimination: a selective review and a connectionist model.},
  author={Pearce, John M},
  journal={Psychological Review},
  volume={101},
  number={4},
  pages={587},
  year={1994},
  publisher={American Psychological Association}
}

@article{ME02,
  title={Associative interference between cues and between outcomes presented together and presented apart: An integration},
  author={Miller, Ralph R and Escobar, Martha},
  journal={Behavioural Processes},
  volume={57},
  number={2-3},
  pages={163--185},
  year={2002},
  publisher={Elsevier}
}

\end{document}